\documentclass[11pt]{article}

\usepackage[a4paper,margin=28mm]{geometry}

\usepackage{amsmath,amssymb,amsthm,mathtools}
\usepackage{mathrsfs}

\usepackage{newtxtext,newtxmath}
\usepackage{bm}

\usepackage{graphicx}
\usepackage{xcolor}
\usepackage{tikz}
\usetikzlibrary{arrows.meta}

\usepackage{hyperref}
\hypersetup{
	colorlinks=true,
	linkcolor=blue,
	citecolor=blue,
	urlcolor=blue
}

\theoremstyle{plain}
\newtheorem{theorem}{Theorem}[section]

\newtheorem{proposition}[theorem]{Proposition}
\newtheorem{corollary}[theorem]{Corollary}

\theoremstyle{definition}
\newtheorem{definition}[theorem]{Definition}
\newtheorem{example}[theorem]{Example}

\theoremstyle{remark}
\newtheorem{remark}[theorem]{Remark}

\title{\Large Generalized Frenet frames and frame sequences of singular space curves}
\author{Shun'ichi Honda}
\date{\today}

\begin{document}
	
	\maketitle
	
	\begin{abstract}
	The classical Frenet frame is defined by a concrete construction from the tangent, principal normal, and binormal vectors of a regular space curve. 
	However, this construction breaks down at singular points and at points where the curvature vanishes. 
	Motivated by this observation, we reconsider the Frenet frame from an axiomatic viewpoint and identify the fundamental properties that characterize it independently of its classical construction. 
	Based on the theory of frontals on the unit sphere and Legendre duality, we introduce a generalized Frenet frame for singular space curves. 
	Furthermore, we introduce the notion of a frame sequence, which gives rise to a family of $k$th-order Frenet frames and their corresponding curvatures and torsions, indexed by $k \in \mathbb{Z}$. 
	This viewpoint provides a unified framework encompassing the Frenet and Bishop frames of space curves as well as the evolute-involute correspondence for spherical frontals. 
	Moreover, explicit recursive formulas are derived, revealing that the curvatures and torsions at each level encode, respectively, the magnitude and rotational behavior of the invariants at the preceding level. 
	\end{abstract}
	
	\medskip
	
	\noindent\textbf{Keywords:} 
	generalized Frenet frame, 
	frame sequence, 
	higher-order curvature and torsion
	
	\medskip
	
	\noindent\textbf{2020 Mathematics Subject Classification.}
	Primary 53A04; Secondary 53A55, 58K05. 
	
	\section{Introduction}
	The classical Frenet frame is one of the fundamental tools in the differential geometry of space curves. 
	For a regular curve with non-vanishing curvature, it provides an orthonormal frame consisting of the tangent, principal normal, and binormal vectors. 
	The curvature and torsion, regarded as functions of arc length, form a complete system of invariants under orientation-preserving Euclidean motions. 
	However, the classical construction breaks down at singular points and at points where the curvature vanishes. 
	This observation suggests that the classical construction may not fully reveal the essential geometric structure of the Frenet frame. 
	It is therefore natural to ask which geometric properties characterize the Frenet frame independently of its classical construction. 
	
	In this paper, we reconsider the Frenet frame from an axiomatic viewpoint. 
	Rather than starting from the classical formulas, we focus on the geometric structure underlying the Frenet frame and identify the fundamental properties that characterize it. 
	This viewpoint leads naturally to the notion of a generalized Frenet frame based on spherical frontals and Legendre duality. 
	We prove that every classical Frenet curve admits a generalized Frenet frame, while the definition also applies to a broad class of singular curves. 
	Furthermore, we introduce the notion of a frame sequence, which gives rise to a family of $k$th-order Frenet frames and their corresponding curvatures and torsions, indexed by $k \in \mathbb{Z}$. 
	This framework provides a unified viewpoint on several classical constructions in the differential geometry of curves. 
	In particular, the classical Frenet frame and the Bishop frame correspond to the $0$th-order and $(-1)$st-order Frenet frames, respectively. 
	Moreover, the frame sequence admits a natural interpretation in terms of the evolute-involute correspondence for spherical frontals. 
	We also derive explicit recursive formulas relating the curvatures and torsions at consecutive levels. 
	These formulas reveal that the curvatures and torsions at each level encode, respectively, the magnitude and rotational behavior of the invariants at the preceding level. 
	
	The paper is organized as follows. 
	In Section 2, we review spherical Legendre curves, Frenet curves, and framed curves. 
	In Section 3, we introduce generalized Frenet frames and establish their basic properties. 
	In Section 4, we develop the theory of frame sequences and derive recursive formulas for the associated curvatures and torsions. 	
	
	Throughout this paper, all curves, mappings, and manifolds are assumed to be of class $C^\infty$ unless otherwise stated. 
	
	\section{Preliminaries}
	Let $\mathbb{R}^3$ be the three-dimensional Euclidean space equipped with the standard inner product $\langle \bm{a}, \bm{b} \rangle = a_1 b_1 + a_2 b_2 + a_3 b_3$, where $\bm{a} = (a_1, a_2, a_3)$ and $\bm{b} = (b_1, b_2, b_3)$. 
	The norm (or length) of a vector $\bm{a} \in \mathbb{R}^3$ is defined by $\| \bm{a} \| = \sqrt{\langle \bm{a}, \bm{a} \rangle}$. 
	The cross product of two vectors $\bm{a}, \bm{b} \in \mathbb{R}^3$ is defined by 
	\[
	\bm{a} \times \bm{b} = 
	\begin{vmatrix}
		a_1 & a_2 & a_3 \\
		b_1 & b_2 & b_3 \\
		\bm{e}_1 & \bm{e}_2 & \bm{e}_3
	\end{vmatrix}
	= (a_2 b_3 - a_3 b_2, a_3 b_1 - a_1 b_3, a_1 b_2 - a_2 b_1), 
	\]
	where $\{ \bm{e}_1, \bm{e}_2, \bm{e}_3 \}$ is the standard basis of $\mathbb{R}^3$. 
	
	\subsection{Spherical Legendre curves}
	We recall the theory of frontals on the unit sphere $S^2 \subset \mathbb{R}^3$, which plays a central role in this paper. 
	For a detailed exposition, we refer to \cite{T1}. 
	We denote by $\Delta$ the set of orthogonal pairs of unit vectors, that is, $\Delta = \{ (\bm{a}, \bm{b}) \in S^2 \times S^2 \mid \langle \bm{a}, \bm{b} \rangle = 0 \}$. 
	Let $I \subset \mathbb{R}$ be an interval. 
	
	\begin{definition}\label{leg.curve}
		We say that a map $(\bm{\gamma}, \bm{\nu}):I \to \Delta$ is a (spherical) \textit{Legendre curve} if $\langle \dot{\bm{\gamma}}(t), \bm{\nu}(t) \rangle = 0$ for all $t \in I$. 
		A curve $\bm{\gamma}:I \to S^2$ is called a (spherical) \textit{frontal} if there exists a map $\bm{\nu}:I \to S^2$ such that $(\bm{\gamma}, \bm{\nu})$ is a Legendre curve in $\Delta$. 
		When $(\bm{\gamma}, \bm{\nu})$ is a Legendre curve, we call $\bm{\nu}$ a \textit{dual} of the frontal $\bm{\gamma}$. 
	\end{definition}
	
	Let $(\bm{\gamma}, \bm{\nu}):I \to \Delta$ be a Legendre curve. 
	We define $\bm{\mu}(t) = \bm{\gamma}(t) \times \bm{\nu}(t)$. 
	Then $\{ \bm{\gamma}, \bm{\nu}, \bm{\mu} \}$ forms an orthonormal frame along $\bm{\gamma}$. 
	
	\begin{proposition}[\cite{T1}]\label{spherical.FS}
		Let $(\bm{\gamma}, \bm{\nu}):I \to \Delta$ be a Legendre curve. 
		Then the following Frenet-Serret type formulas hold: 
		\begin{align}\label{FS1}
			\frac{d}{dt}
			\begin{pmatrix}
				\bm{\gamma}(t) \\
				\bm{\nu}(t) \\
				\bm{\mu}(t)
			\end{pmatrix}
			=
			\begin{pmatrix}
				0 & 0 & m(t) \\
				0 & 0 & n(t) \\
				-m(t) & -n(t) & 0
			\end{pmatrix}
			\begin{pmatrix}
				\bm{\gamma}(t) \\
				\bm{\nu}(t) \\
				\bm{\mu}(t)
			\end{pmatrix}, 
		\end{align}
		where $m(t) = \langle \dot{\bm{\gamma}}(t), \bm{\mu}(t) \rangle$ and $n(t) = \langle \dot{\bm{\nu}}(t), \bm{\mu}(t) \rangle$. 
	\end{proposition}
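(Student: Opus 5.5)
The plan is to expand each derivative in the orthonormal frame $\{\bm{\gamma},\bm{\nu},\bm{\mu}\}$ and identify the coefficients from the constraints. Since the frame is orthonormal for every $t$, any vector $\bm{v}(t)$ decomposes as
\[
\bm{v}=\langle \bm{v},\bm{\gamma}\rangle\bm{\gamma}+\langle \bm{v},\bm{\nu}\rangle\bm{\nu}+\langle \bm{v},\bm{\mu}\rangle\bm{\mu}.
\]
I will apply this to $\dot{\bm{\gamma}}$, $\dot{\bm{\nu}}$ and $\dot{\bm{\mu}}$ in turn.

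First I differentiate the relations $\langle\bm{\gamma},\bm{\gamma}\rangle=\langle\bm{\nu},\bm{\nu}\rangle=\langle\bm{\mu},\bm{\mu}\rangle=1$ and $\langle\bm{\gamma},\bm{\nu}\rangle=\langle\bm{\gamma},\bm{\mu}\rangle=\langle\bm{\nu},\bm{\mu}\rangle=0$. Writing the derivative matrix as $A=(a_{ij})$, with $a_{ij}$ the inner product of the derivative of the $i$th frame vector with the $j$th, this shows $A$ is skew-symmetric, so all diagonal entries vanish.

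Next I use the Legendre condition $\langle\dot{\bm{\gamma}},\bm{\nu}\rangle=0$, which gives $a_{12}=0$ and hence $a_{21}=-a_{12}=0$. This means $\langle\dot{\bm{\nu}},\bm{\gamma}\rangle=-\langle\bm{\nu},\dot{\bm{\gamma}}\rangle=0$.

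The remaining entries are $a_{13}=\langle\dot{\bm{\gamma}},\bm{\mu}\rangle=m$ and $a_{23}=\langle\dot{\bm{\nu}},\bm{\mu}\rangle=n$. Skew-symmetry then forces $a_{31}=-m$ and $a_{32}=-n$, which is exactly the matrix in \eqref{FS1}.

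The only point needing care is the vanishing of the $\bm{\gamma}$-component of $\dot{\bm{\nu}}$. This is where the Legendre condition enters, through the derivative of $\langle\bm{\gamma},\bm{\nu}\rangle=0$. Everything else is routine.
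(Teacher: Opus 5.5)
Your proof is correct and is the standard argument: the paper gives no proof of its own and defers to \cite{T1}, where the formulas come from exactly this computation. There, one differentiates the orthonormality relations of $\{\bm{\gamma},\bm{\nu},\bm{\mu}\}$ to get a skew-symmetric coefficient matrix, and the Legendre condition $\langle\dot{\bm{\gamma}},\bm{\nu}\rangle=0$ kills the $(1,2)$ and $(2,1)$ entries.
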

	
	We call $(m, n)$ the \textit{curvature} of the Legendre curve $(\bm{\gamma}, \bm{\nu})$. 
	The existence and uniqueness theorem for Legendre curves with prescribed curvature $(m,n)$ was established in \cite{T1}. 
	
	\begin{remark}
		Let $(\bm{\gamma}, \bm{\nu}): I \to \Delta$ be a Legendre curve. 
		Then $(\bm{\nu}, \bm{\gamma}): I \to \Delta$ is also a Legendre curve. 
		This is one reason why $\bm{\gamma}$ and $\bm{\nu}$ are called dual to each other. 
		Indeed, the Frenet-Serret type formulas reveal that $\bm{\gamma}$ and $\bm{\nu}$ play symmetric roles.
	\end{remark}
	
	\begin{remark}\label{rem.analytic.frontal}
		It was shown in \cite{T1} that every real-analytic curve germ $\bm{\gamma}:(I,t_0) \to S^2$ is a spherical frontal. 
		Therefore, for real-analytic spherical curves, the theory of spherical Legendre curves can always be applied locally. 
	\end{remark}
	
	We summarize the notions of evolute and involute for frontals in $S^2$, since they play an important role in understanding the properties of the Frenet frame and the Bishop frame discussed later. 
	For detailed accounts of evolutes and involutes of frontals in $S^2$, we refer the reader to \cite{LP1, T1}. 
		
	\begin{definition}
		Let $(\bm{\gamma}, \bm{\nu}): I \to \Delta$ be a Legendre curve, and let 
		$\bm{\mu}(t) = \bm{\gamma}(t) \times \bm{\nu}(t)$. 
		If there exists a map $\bm{\eta}: I \to S^2$ such that $(\bm{\mu}, \bm{\eta}): I \to \Delta$ is a Legendre curve, 
		then $\bm{\eta}$ is called an \textit{evolute} of $(\bm{\gamma}, \bm{\nu})$. 
	\end{definition}
	
	Geometrically, the evolute corresponds to the dual curve associated with the (generalized) tangent vector of $\bm{\gamma}$, which is given by $\bm{\mu}(t) = \bm{\gamma}(t) \times \bm{\nu}(t)$. 
	Furthermore, by the Frenet-Serret type formulas (see \eqref{FS1}), the evolutes of $\bm{\gamma}$ and $\bm{\nu}$ coincide. 
	These relationships are summarized in Figure~\ref{fig:evolute}. 

	\begin{figure}[htbp]
		\centering
		\begin{tikzpicture}[
			>=Stealth,
			scale=1.4,
			tangent/.style={->, thick},
			evolute/.style={->, dashed, thick},
			dual/.style={<->, thin}
			]
			
			\node (gamma) at (0,2) {$\bm{\gamma}$};
			\node (nu)    at (0,0) {$\bm{\nu}$};
			\node (mu)    at (2,2) {$\bm{\mu}$};
			\node (eta)   at (2,0) {$\bm{\eta}$};
			
			\draw[tangent] (gamma) -- (mu);
			\draw[tangent] (nu) -- (mu);
			\draw[dual]    (gamma) -- (nu);
			\draw[dual]    (mu) -- (eta);
			\draw[evolute] (gamma) -- (eta);
			\draw[evolute] (nu) -- (eta);
			
			\node at (1,2.2) {\small tangent};
			\node at (-0.45,1) {\small dual}; 
			\node at (1,-0.25) {\small evolute};
			
			\draw[tangent] (3.2,1.4) -- (4.0,1.4);
			\node[anchor=west] at (4.1,1.4) {\small tangent};
			
			\draw[evolute] (3.2,1.0) -- (4.0,1.0);
			\node[anchor=west] at (4.1,1.0) {\small evolute};
			
			\draw[dual] (3.2,0.6) -- (4.0,0.6);
			\node[anchor=west] at (4.1,0.6) {\small dual};
			
		\end{tikzpicture}
		\caption{
			Relationships among $\bm{\gamma}$, $\bm{\nu}$, $\bm{\mu}$, and $\bm{\eta}$; 
			the evolutes of $\bm{\gamma}$ and $\bm{\nu}$ coincide. 
		}
		\label{fig:evolute}
	\end{figure}
	
	\begin{definition}
		Let $(\bm{\gamma}, \bm{\nu}): I \to \Delta$ and $(\bm{\sigma}, \bm{\tau}): I \to \Delta$ be Legendre curves. 
		$\bm{\sigma}$ is called an \textit{involute} of $\bm{\nu}$ if $\bm{\nu}$ is an evolute of $(\bm{\sigma}, \bm{\tau})$. 
	\end{definition}
	
	Note that if $\bm{\sigma}$ is an involute of $\bm{\nu}$, then $\bm{\tau}$ is also an involute of $\bm{\nu}$. 
	The correspondences among $\bm{\sigma}$, $\bm{\tau}$, $\bm{\gamma}$, and $\bm{\nu}$ are summarized in Figure~\ref{fig:involute}. 
	Moreover, there exist infinitely many involutes of $\bm{\nu}$. 
	Further details on this non-uniqueness will be discussed later. 
	
	\begin{figure}[htbp]
		\centering
		\begin{tikzpicture}[
			>=Stealth,
			scale=1.4,
			tangent/.style={->, thick},
			evolute/.style={->, dashed, thick},
			involute/.style={->, dotted, thick},
			dual/.style={<->, thin}
			]
			
			\node (sigma) at (0,2) {$\bm{\sigma}$};
			\node (tau)   at (0,0) {$\bm{\tau}$};
			\node (gamma) at (2,2) {$\bm{\gamma}$};
			\node (nu)    at (2,0) {$\bm{\nu}$};
			
			\draw[tangent] (sigma) -- (gamma);
			\draw[tangent] (tau) -- (gamma);
			
			\draw[dual] (sigma) -- (tau);
			\draw[dual] (gamma) -- (nu);
			
			\draw[evolute] (sigma) -- (nu);
			\draw[evolute] (tau) -- (nu);
			
			\draw[involute, bend left=25] (nu) to (sigma);
			\draw[involute] (nu) to[out=250, in=290] (tau);
			
			\node at (1,2.25) {\small tangent};
			\node at (-0.5,1) {\small dual};
			\node at (1,-0.25) {\small evolute};
			\node at (1,-0.95) {\small involute};
			
			\draw[tangent] (3.2,1.6) -- (4.0,1.6);
			\node[anchor=west] at (4.1,1.6) {\small tangent};
			
			\draw[evolute] (3.2,1.2) -- (4.0,1.2);
			\node[anchor=west] at (4.1,1.2) {\small evolute};
			
			\draw[involute] (3.2,0.8) -- (4.0,0.8);
			\node[anchor=west] at (4.1,0.8) {\small involute};
			
			\draw[dual] (3.2,0.4) -- (4.0,0.4);
			\node[anchor=west] at (4.1,0.4) {\small dual};
			
		\end{tikzpicture}
		
		\caption{
			Relationships among $\bm{\sigma}$, $\bm{\tau}$, $\bm{\gamma}$, and $\bm{\nu}$; 
			$\bm{\sigma}$ and $\bm{\tau}$ are involutes of $\bm{\nu}$. 
		}
		\label{fig:involute}
	\end{figure}
	
	\subsection{Frenet curves} \label{sec.frenet}
	We review basic concepts in the classical differential geometry of regular space curves in $\mathbb{R}^3$. 
	Suppose that $\bm{\gamma} : I \to \mathbb{R}^3$ is a regular space curve such that $\dot{\bm{\gamma}}(t)$ and $\ddot{\bm{\gamma}}(t)$ are linearly independent for all $t \in I$. 
	Such a regular curve satisfying this linear independence condition is called a \textit{Frenet curve}. 
	Then we obtain an orthonormal frame
	\[
	\left\{
	\bm{t}(t), \bm{n}(t), \bm{b}(t)
	\right\}
	=
	\left\{
	\frac{\dot{\bm{\gamma}}(t)}{\| \dot{\bm{\gamma}}(t) \|}, \frac{(\dot{\bm{\gamma}}(t) \times \ddot{\bm{\gamma}}(t)) \times \dot{\bm{\gamma}}(t)}{\| (\dot{\bm{\gamma}}(t) \times \ddot{\bm{\gamma}}(t)) \times \dot{\bm{\gamma}}(t) \|}, \frac{\dot{\bm{\gamma}}(t) \times \ddot{\bm{\gamma}}(t)}{\| \dot{\bm{\gamma}}(t) \times \ddot{\bm{\gamma}}(t) \|}
	\right\}
	\]
	along $\bm{\gamma}$, called the Frenet frame. 
	Here, $\bm{t}$, $\bm{n}$, and $\bm{b}$ are referred to as the \textit{tangent}, \textit{principal normal}, and \textit{binormal vectors}, respectively. 
	Then the following Frenet-Serret formulas hold: 
	\[
	\frac{d}{dt}
	\begin{pmatrix}
		\bm{t}(t) \\
		\bm{n}(t) \\
		\bm{b}(t)
	\end{pmatrix}
	=
	\| \dot{\bm{\gamma}}(t) \|
	\begin{pmatrix}
		0 & \kappa(t) & 0 \\
		- \kappa(t) & 0 & \tau(t) \\
		0 & -\tau(t) & 0
	\end{pmatrix}
	\begin{pmatrix}
		\bm{t}(t) \\
		\bm{n}(t) \\
		\bm{b}(t)
	\end{pmatrix}, 
	\]
	where
	\[
	\kappa(t) = \frac{\| \dot{\bm{\gamma}}(t) \times \ddot{\bm{\gamma}}(t) \|}{\| \dot{\bm{\gamma}}(t) \|^3}, \quad 
	\tau(t) = \frac{\det (\dot{\bm{\gamma}}(t), \ddot{\bm{\gamma}}(t), \dddot{\bm{\gamma}}(t))}{\| \dot{\bm{\gamma}}(t) \times \ddot{\bm{\gamma}}(t) \|^2}. 
	\]
	The functions $\kappa$ and $\tau$ are called the \textit{curvature} and \textit{torsion} of $\bm{\gamma}$. 
	It is well known that the curvature $\kappa$ and torsion $\tau$, regarded as functions of arc length, form a complete system of invariants of a Frenet curve under orientation-preserving Euclidean motions. 
	
	In this paper, we consider a generalization of the Frenet frame. 
	For this purpose, it is important to distinguish the principal normal vector from the direction of acceleration. 
	
	\begin{remark}\label{duality.of.frenet}
		We emphasize that, even for a Frenet curve, the vector $\bm{n}$ does not necessarily coincide with the direction of acceleration. 
		Indeed, a direct computation shows that 
		\[
		\ddot{\bm{\gamma}}(t) = \frac{d}{dt} \left( \| \dot{\bm{\gamma}}(t) \| \right) \bm{t}(t) + \| \dot{\bm{\gamma}}(t) \|^2 \kappa(t) \bm{n}(t). 
		\]
		Hence, $\ddot{\bm{\gamma}}$ is parallel to $\bm{n}$ if and only if $\|\dot{\bm{\gamma}}\|$ is constant. 
	\end{remark}
	
	We next examine the geometric structure underlying the Frenet frame. 
	The following observation serves as the starting point for our generalization. 
	
	\begin{remark}\label{frenet.characterization}
		The duality between $\bm{t}$ and $\bm{b}$ plays an essential role in the geometric structure of the Frenet frame (see Definition \ref{leg.curve}). 
		The pair $(\bm{t}, \bm{b})$ can be regarded as a Legendre curve in $\Delta$.
		When regarded as a Legendre curve in $\Delta$, it satisfies  
		\[
		\frac{d}{dt}
		\begin{pmatrix}
			\bm{t}(t) \\
			\bm{b}(t) \\
			-\bm{n}(t)
		\end{pmatrix}
		=
		\| \dot{\bm{\gamma}}(t) \|
		\begin{pmatrix}
			0 & 0 & -\kappa(t) \\
			0 & 0 & \tau(t) \\
			\kappa(t) & -\tau(t) & 0
		\end{pmatrix}
		\begin{pmatrix}
			\bm{t}(t) \\
			\bm{b}(t) \\
			-\bm{n}(t)
		\end{pmatrix}. 
		\]
		In particular, the Frenet frame is completely determined by the Legendre curve $(\bm{t} , \bm{b})$, since $-\bm{n} =\bm{t} \times \bm{b}$. 
		The skew-symmetric form of the coefficient matrix, together with its specific pattern of vanishing entries, reflects the essential geometric structure of the Frenet frame. 
		These relations can be illustrated schematically (see Figure \ref{fig:reg.frenet}). 
		
		Thus, the essential structure of the Frenet frame is encoded in the Legendre curve $(\bm t,\bm b)$ and the corresponding Frenet-Serret type formulas, rather than in the classical construction from higher-order derivatives of $\bm\gamma$. 
		Motivated by this observation, we introduce a generalized Frenet frame in the next section. 
	\end{remark}
	
	\begin{figure}[htbp]
		\centering
		\begin{tikzpicture}[
			>=Stealth,
			scale=1.4,
			tangent/.style={->, thick},
			dual/.style={<->, thin}
			]
			
			\node (gamma) at (0,4) {$\bm{\gamma}$};
			\node (t)     at (0,2) {$\bm{t}$};
			\node (b)     at (0,0) {$\bm{b}$};
			\node (n)     at (2,2) {$\bm{n}$};
			
			\draw[tangent] (gamma) -- (t);
			\draw[tangent] (t) -- (n);
			\draw[tangent] (b) -- (n);
			
			\draw[dual] (t) -- (b);
			
			\node at (-0.6,3) {\small tangent};
			\node at (-0.5,1) {\small dual};
			
			\draw[tangent] (3.2,1.6) -- (4.0,1.6);
			\node[anchor=west] at (4.1,1.6) {\small tangent};
			
			\draw[dual] (3.2,1.2) -- (4.0,1.2);
			\node[anchor=west] at (4.1,1.2) {\small dual};
			
		\end{tikzpicture}
		\caption{
			The dual structure underlying the Frenet frame. 
		}
		\label{fig:reg.frenet}
	\end{figure}
	
	\subsection{Framed curves}\label{framed}	
	To treat space curves with singularities in a unified manner, we briefly review the theory of framed curves. 
	For a detailed exposition, we refer to \cite{HT1}. 
	
	\begin{definition}
		We say that a map $(\bm{\gamma}, \bm{\nu}_1, \bm{\nu}_2):I \to \mathbb{R}^3 \times \Delta$ is a \textit{framed curve} if $\langle \dot{\bm{\gamma}}(t), \bm{\nu}_i(t) \rangle = 0$ for all $t \in I$ and $i = 1, 2$. 
		A curve $\bm{\gamma}: I \to \mathbb{R}^3$ is called a \textit{framed base curve} if there exists a map $(\bm{\nu}_1, \bm{\nu}_2): I \to \Delta$ such that $(\bm{\gamma}, \bm{\nu}_1, \bm{\nu}_2)$ is a framed curve. 
	\end{definition}
	
	Let $(\bm{\gamma}, \bm{\nu}_1, \bm{\nu}_2):I \to \mathbb{R}^3 \times \Delta$ be a framed curve. 
	We define $\bm{\mu}(t) = \bm{\nu}_1(t) \times \bm{\nu}_2(t)$. 
	Then $\{ \bm{\nu}_1, \bm{\nu}_2, \bm{\mu} \}$ forms an orthonormal frame along $\bm{\gamma}$. 
	\begin{proposition}[\cite{HT1}]
		Let $(\bm{\gamma}, \bm{\nu}_1, \bm{\nu}_2):I \to \mathbb{R}^3 \times \Delta$ be a framed curve. 
		Then the following Frenet-Serret type formulas hold: 
		\begin{align}\label{FS2}
			\frac{d}{dt}
			\begin{pmatrix}
				\bm{\nu}_1(t) \\
				\bm{\nu}_2(t) \\
				\bm{\mu}(t)
			\end{pmatrix}
			=
			\begin{pmatrix}
				0 & \ell(t) & m(t) \\
				-\ell(t) & 0 & n(t) \\
				-m(t) & -n(t) & 0
			\end{pmatrix}
			\begin{pmatrix}
				\bm{\nu}_1(t) \\
				\bm{\nu}_2(t) \\
				\bm{\mu}(t)
			\end{pmatrix}, \quad \dot{\bm{\gamma}}(t) = \alpha(t) \bm{\mu}(t), 
		\end{align}
		where $\ell(t) = \langle \dot{\bm{\nu}}_1(t), \bm{\nu}_2(t) \rangle$, $m(t) = \langle \dot{\bm{\nu}}_1(t), \bm{\mu}(t) \rangle$, $n(t) = \langle \dot{\bm{\nu}}_2(t), \bm{\mu}(t) \rangle$ and $\alpha(t) = \langle \dot{\bm{\gamma}}(t), \bm{\mu}(t) \rangle$. 
	\end{proposition}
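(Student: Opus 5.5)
The formulas follow by differentiating the orthonormality relations of the frame $\{\bm{\nu}_1, \bm{\nu}_2, \bm{\mu}\}$ and expanding in that frame.

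\emph{Step 1: the frame.} Since $(\bm{\nu}_1, \bm{\nu}_2)$ takes values in $\Delta$, the vectors $\bm{\nu}_1(t)$ and $\bm{\nu}_2(t)$ are orthogonal unit vectors. Hence $\bm{\mu} = \bm{\nu}_1 \times \bm{\nu}_2$ is a unit vector orthogonal to both, and $\{\bm{\nu}_1, \bm{\nu}_2, \bm{\mu}\}$ is a positively oriented orthonormal frame for every $t \in I$. It depends smoothly on $t$.

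\emph{Step 2: expansion and skew-symmetry.} Any vector $\bm{v}$ satisfies
\[
\bm{v} = \langle \bm{v}, \bm{\nu}_1 \rangle \bm{\nu}_1 + \langle \bm{v}, \bm{\nu}_2 \rangle \bm{\nu}_2 + \langle \bm{v}, \bm{\mu} \rangle \bm{\mu}.
\]
Apply this to $\dot{\bm{\nu}}_1$, $\dot{\bm{\nu}}_2$ and $\dot{\bm{\mu}}$; the coefficient matrix is then $A = (\langle \dot{\bm{e}}_i, \bm{e}_j \rangle)$ with $(\bm{e}_1, \bm{e}_2, \bm{e}_3) = (\bm{\nu}_1, \bm{\nu}_2, \bm{\mu})$. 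Differentiating $\langle \bm{e}_i, \bm{e}_j \rangle = \delta_{ij}$ gives
\[
\langle \dot{\bm{e}}_i, \bm{e}_j \rangle + \langle \bm{e}_i, \dot{\bm{e}}_j \rangle = 0,
\]
so $A$ is skew-symmetric. The diagonal entries therefore vanish, and the below-diagonal entries are the negatives of the above-diagonal ones. By definition the three independent entries are $\ell$, $m$ and $n$. This yields the matrix in \eqref{FS2}.

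\emph{Step 3: the base curve.} The framed-curve condition states that $\langle \dot{\bm{\gamma}}, \bm{\nu}_i \rangle = 0$ for $i = 1, 2$. The expansion of Step 2 then reduces to $\dot{\bm{\gamma}} = \langle \dot{\bm{\gamma}}, \bm{\mu} \rangle \bm{\mu} = \alpha \bm{\mu}$. Smoothness of $\ell, m, n, \alpha$ is immediate, since each is an inner product of smooth maps.

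There is no genuine obstacle here. The only point needing care is sign and index bookkeeping: the entry in row $i$, column $j$ must be $\langle \dot{\bm{e}}_i, \bm{e}_j \rangle$, and this must match the stated definitions of $\ell$, $m$, $n$. The argument is the same as the one behind Proposition \ref{spherical.FS}.
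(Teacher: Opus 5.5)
Your proof is correct and is the standard argument: differentiating the orthonormality relations makes the coefficient matrix $(\langle \dot{\bm{e}}_i, \bm{e}_j\rangle)$ skew-symmetric, and the framed-curve condition removes the $\bm{\nu}_1$ and $\bm{\nu}_2$ components of $\dot{\bm{\gamma}}$, leaving $\dot{\bm{\gamma}} = \alpha\bm{\mu}$. The paper gives no proof of its own here; it only cites the result, and your argument is the one behind that cited statement.
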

	
	We call $(\ell, m, n, \alpha)$ the \textit{curvature} of the framed curve $(\bm{\gamma}, \bm{\nu}_1, \bm{\nu}_2)$. 
	The existence and uniqueness theorem for framed curves with prescribed curvature $(\ell, m, n, \alpha)$ was established in \cite{HT1}. 
	
	\begin{remark}
		In the definition of framed curves, the pair $(\bm{\nu}_1, \bm{\nu}_2)$ is not uniquely determined by $\bm{\gamma}$. 
		Indeed, $(\bm{\nu}_1, \bm{\nu}_2)$ forms an orthonormal basis of the (generalized) normal plane of $\bm{\gamma}$, and hence has freedom under rotations and reflections. 
	\end{remark}
	
	\begin{remark}\label{rem.analytic.framed}
		It was shown in \cite{HT1} that every real-analytic curve germ $\bm{\gamma}:(I,t_0) \to \mathbb{R}^3$ is a framed base curve. 
		Therefore, for real-analytic space curves, the theory of framed curves can always be applied locally. 
	\end{remark}
	
	If $\ell(t) = 0$ in (\ref{FS2}), then the frame $\{ \bm{\nu}_1, \bm{\nu}_2, \bm{\mu} \}$ is called a \textit{Bishop frame}. 
	By choosing a suitable rotation of $(\bm{\nu}_1, \bm{\nu}_2)$, a Bishop frame can always be constructed. 
	A Bishop frame is a distinguished frame, also known as a rotation-minimizing frame. 
	It is well-defined even for regular curves with vanishing curvature, and was originally introduced by Bishop \cite{B}. 
	
	We now describe an explicit construction of the Bishop frame. 
	For a framed curve $(\bm{\gamma}, \bm{\nu}_1, \bm{\nu}_2): I \to \mathbb{R}^3 \times \Delta$, we define $(\bm{b}_1, \bm{b}_2): I \to \Delta$ by 
	\[
	\begin{pmatrix}
		\bm{b}_1(t) \\
		\bm{b}_2(t)
	\end{pmatrix}
	=
	\begin{pmatrix}
		\cos \theta(t) & -\sin \theta(t) \\
		\sin \theta(t) & \cos \theta(t)
	\end{pmatrix}
	\begin{pmatrix}
		\bm{\nu}_1(t) \\
		\bm{\nu}_2(t)
	\end{pmatrix},
	\]
	where $\theta: I \to \mathbb{R}$ is a smooth function. 
	Then $(\bm{\gamma}, \bm{b}_1, \bm{b}_2)$ is also a framed curve. 
	The frame $\{ \bm{b}_1, \bm{b}_2, \bm{\mu} \}$ is obtained by rotating $\{ \bm{\nu}_1, \bm{\nu}_2, \bm{\mu} \}$ by $\theta$, and is called a \textit{rotated frame}. 
	By a direct computation, we have the Frenet-Serret type formulas: 
	\begin{align*}
		\frac{d}{dt}
		\begin{pmatrix}
			\bm{b}_1(t) \\
			\bm{b}_2(t) \\
			\bm{\mu}(t)
		\end{pmatrix}
		=
		\begin{pmatrix}
			0 & \overline{\ell}(t) & \overline{m}(t) \\
			-\overline{\ell}(t) & 0 & \overline{n}(t) \\
			-\overline{m}(t) & -\overline{n}(t) & 0
		\end{pmatrix}
		\begin{pmatrix}
			\bm{b}_1(t) \\
			\bm{b}_2(t) \\
			\bm{\mu}(t)
		\end{pmatrix}, \quad \dot{\bm{\gamma}}(t) = \alpha(t) \bm{\mu}(t), 
	\end{align*}
	where $\overline{\ell}(t) = \ell(t) - \dot{\theta}(t)$, $\overline{m}(t) = m(t) \cos \theta(t) - n(t) \sin \theta(t)$, and $\overline{n}(t) = m(t) \sin \theta(t) + n(t) \cos \theta(t)$. 
	If $\theta$ satisfies $\dot{\theta}(t) = \ell(t)$, then $\overline{\ell}(t) = 0$, and the frame $\{ \bm{b}_1, \bm{b}_2, \bm{\mu} \}$ is a \textit{Bishop frame} along $\bm{\gamma}$. 
	
	We conclude with two remarks on the Bishop frame. 

	\begin{remark}
		The Bishop frame admits the following geometric characterization. 
		Since $\overline{\ell}(t) = 0$, the derivatives of $\bm{b}_1$ and $\bm{b}_2$ have no components along $\bm{b}_1$ or $\bm{b}_2$, and hence are always parallel to $\bm{\mu}$. 
	\end{remark}
	
	\begin{remark}\label{remark.bishop2}
		A Bishop frame $\{ \bm{b}_1, \bm{b}_2, \bm{\mu} \}$ obtained by rotation is not uniquely determined by the original frame $\{ \bm{\nu}_1, \bm{\nu}_2, \bm{\mu} \}$. 
		Indeed, since $\theta$ is required only to satisfy $\dot{\theta}(t) = \ell(t)$, it is determined up to a constant of integration. 
	\end{remark}
	
	\section{Generalized Frenet frames}
	
	In this section, motivated by the geometric structure of the Frenet frame discussed in Remark \ref{frenet.characterization}, we introduce a generalized Frenet frame for singular space curves. 

	\begin{definition}\label{generalized.frenet}
		Let $\bm{\gamma}: I \to \mathbb{R}^3$ be a smooth curve. 
		We say that $\bm{\gamma}$ admits a (generalized) \textit{unit tangent vector} if there exist a function $\alpha : I \to \mathbb{R}$ and a map $\bm{t} : I \to S^2$ such that $\dot{\bm{\gamma}}(t) = \alpha(t)\bm{t}(t)$ for all $t \in I$. 
		In this case, $\alpha$ is called the (signed) \textit{speed} and $\bm{t}$ the (generalized) \textit{unit tangent vector}. 
		Furthermore, we say that $\bm{\gamma}$ admits a (generalized) \textit{Frenet frame} if $\bm{t}$ is a frontal in $S^2$. 
	\end{definition}
	
	Suppose that $\bm{\gamma}$ admits a (generalized) Frenet frame. 
	Let $\bm{t}$ be a (generalized) unit tangent vector of $\bm{\gamma}$, and let $\alpha$ be a corresponding (signed) speed. 
	Since $\bm{t}$ is a frontal in $S^2$, let $\bm{b}:I \to S^2$ be a dual of $\bm{t}$. 
	That is, $(\bm{t}, \bm{b})$ is a Legendre curve in $\Delta$. 
	We define $\bm{n}(t) = \bm{t}(t) \times \bm{b}(t)$. 
	Then $\{ \bm{t}, \bm{b}, \bm{n} \}$ forms an orthonormal frame along $\bm{\gamma}$. 
	By Proposition \ref{spherical.FS}, we obtain the following corollary. 
	
	\begin{corollary}\label{Frenet.Serret.G.Frenet}
		Under the above notation, the following Frenet-Serret type formulas hold: 
		\[
		\frac{d}{dt}
		\begin{pmatrix}
			\bm{t}(t) \\
			\bm{b}(t) \\
			\bm{n}(t)
		\end{pmatrix}
		=
		\begin{pmatrix}
			0 & 0 & \kappa(t) \\
			0 & 0 & \tau(t) \\
			-\kappa(t) & -\tau(t) & 0
		\end{pmatrix}
		\begin{pmatrix}
			\bm{t}(t) \\
			\bm{b}(t) \\
			\bm{n}(t)
		\end{pmatrix}, \quad \dot{\bm{\gamma}}(t) = \alpha(t) \bm{t}(t), 
		\]
		where $\kappa(t) = \langle \dot{\bm{t}}(t), \bm{n}(t) \rangle$ and $\tau(t) = \langle \dot{\bm{b}}(t), \bm{n}(t) \rangle$. 
	\end{corollary}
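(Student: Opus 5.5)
The corollary follows by applying Proposition~\ref{spherical.FS} to the Legendre curve $(\bm{t}, \bm{b}) : I \to \Delta$. The proof only has to check that the hypotheses match and that the notation translates correctly.

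First, by Definition~\ref{generalized.frenet} and the choice of $\bm{b}$ as a dual of the frontal $\bm{t}$, the pair $(\bm{t}, \bm{b})$ takes values in $\Delta$. It also satisfies $\langle \dot{\bm{t}}(t), \bm{b}(t) \rangle = 0$ for all $t$, so it is a Legendre curve in the sense of Definition~\ref{leg.curve}. We then set $\bm{\gamma} \mapsto \bm{t}$ and $\bm{\nu} \mapsto \bm{b}$ in Proposition~\ref{spherical.FS}. The vector $\bm{\mu} = \bm{\gamma} \times \bm{\nu}$ becomes $\bm{t} \times \bm{b}$, which is exactly $\bm{n}$ as defined above. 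The functions $m = \langle \dot{\bm{\gamma}}, \bm{\mu} \rangle$ and $n = \langle \dot{\bm{\nu}}, \bm{\mu} \rangle$ become $\kappa = \langle \dot{\bm{t}}, \bm{n} \rangle$ and $\tau = \langle \dot{\bm{b}}, \bm{n} \rangle$. Substituting into \eqref{FS1} gives the displayed matrix identity verbatim.

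It is worth recalling why \eqref{FS1} holds, so the argument is self-contained. Since $\{\bm{t}, \bm{b}, \bm{n}\}$ is orthonormal, the coefficient matrix of the derivatives is skew-symmetric. This follows by differentiating $\langle \bm{e}_i, \bm{e}_j \rangle = \delta_{ij}$. The $(1,2)$ entry is $\langle \dot{\bm{t}}, \bm{b} \rangle$, which vanishes by the Legendre condition, and by skew-symmetry so does the $(2,1)$ entry. The diagonal entries vanish because each vector has unit length. The remaining entries are $\pm\kappa$ and $\pm\tau$ by definition.

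Finally, the second relation $\dot{\bm{\gamma}}(t) = \alpha(t)\bm{t}(t)$ is just the defining property of the generalized unit tangent vector and signed speed in Definition~\ref{generalized.frenet}, so nothing needs to be proved there. There is no real obstacle in this argument. The only point needing care is the consistent identification $\bm{\mu} = \bm{n}$ with the orientation $\bm{n} = \bm{t} \times \bm{b}$. This is the opposite sign convention from the classical principal normal, as Remark~\ref{frenet.characterization} shows with $-\bm{n}$.
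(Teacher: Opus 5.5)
Your proposal is correct and follows the paper's route exactly: the paper obtains the corollary in one line by applying Proposition~\ref{spherical.FS} to the Legendre curve $(\bm{t},\bm{b})$ with $\bm{\mu}=\bm{t}\times\bm{b}=\bm{n}$, $m=\kappa$, $n=\tau$, while $\dot{\bm{\gamma}}=\alpha\bm{t}$ holds by definition. Your re-derivation of the skew-symmetric coefficient matrix from orthonormality and the Legendre condition is a correct self-contained check, which the paper simply delegates to the cited proposition.
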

	
	In view of the duality between $\bm{t}$ and $\bm{b}$ and the structure of the coefficient matrix in the Frenet-Serret formulas for regular Frenet curves, both described in Remark \ref{frenet.characterization}, the frame $\{\bm{t}, \bm{b}, \bm{n}\}$ introduced in this section may be regarded as a generalization of the Frenet frame for regular curves. 
	
	Motivated by the above discussion, we call the frame $\{\bm{t}, \bm{b}, \bm{n}\}$ constructed in this section a (generalized) \textit{Frenet frame}. 
	We also call $\bm{b}$ the \textit{binormal vector}, $\bm{n}$ the \textit{principal normal vector}, $\kappa$ the \textit{curvature}, and $\tau$ the \textit{torsion}. 
	Note that $\kappa$ and $\tau$ are invariants in the sense of framed curves (see Section \ref{framed}). 
	
	\begin{remark}
		The case where the generalized unit tangent vector $\bm t$ is regular was studied in \cite{H2}. 
		Definition \ref{generalized.frenet} may be regarded as an extension of that framework to the singular case. 
	\end{remark}
	
	The following examples illustrate the scope of the notion of generalized Frenet frames. 
	We first show that every classical Frenet curve admits a generalized Frenet frame, then construct one for a singular curve, and finally present a regular curve that admits none. 
		
	\begin{example}
		Let $\bm{\gamma}:I \to \mathbb{R}^3$ be a Frenet curve with classical Frenet frame $\{\bm{t}, \bm{n}, \bm{b}\}$ (see Section \ref{sec.frenet}). 
		Since $\langle \bm{t}(t), \bm{b}(t) \rangle = \langle \dot{\bm{t}}(t), \bm{b}(t) \rangle = 0$ for all $t \in I$, $\bm{t}$ is a frontal in $S^2$. 
		Therefore, $\bm{\gamma}$ admits a generalized Frenet frame. 
	\end{example}
	
	\begin{example}\label{example.156}
		Consider the curve $\bm{\gamma}: I \to \mathbb{R}^3$ defined by
		\[
		\bm{\gamma}(t)= \left( \frac{1}{2} t^2, \frac{1}{5} t^5, \frac{1}{6} t^6 \right). 
		\]
		By direct computation,
		\[
		\dot{\bm{\gamma}}(t) = \left( t, t^4, t^5 \right) = t \sqrt{1 + t^6 + t^8} \cdot \frac{\left( 1, t^3, t^4 \right)}{\sqrt{1 + t^6 + t^8}}. 
		\]
		Hence, setting
		\[
		\alpha(t) = t \sqrt{1 + t^6 + t^8}, \quad \bm{t}(t) = \frac{\left( 1, t^3, t^4 \right)}{\sqrt{1 + t^6 + t^8}}, 
		\]
		we obtain $\dot{\bm{\gamma}}(t)=\alpha(t)\bm{t}(t)$, where $\alpha$ is the speed and $\bm{t}$ is the unit tangent vector of $\bm{\gamma}$. 
		By further direct computation, we obtain 
		\begin{align*}
			\dot{\bm{t}}(t) &= \frac{1}{(1 + t^6 + t^8)^\frac{3}{2}} \left( - 3t^5 - 4t^7, 3t^2 - t^{10}, 4t^3 + t^9 \right) \\
			&= \frac{t^2 \sqrt{9 + 16t^2 + 9t^6 + 26t^8 + 16t^{10} + t^{14} + t^{16}}}{(1 + t^6 + t^8)^\frac{3}{2}} \cdot \frac{\left( -3t^3 - 4t^5, 3 - t^8, 4t + t^7 \right)}{\sqrt{9 + 16t^2 + 9t^6 + 26t^8 + 16t^{10} + t^{14} + t^{16}}}. 
		\end{align*}
		Next, define 
		\begin{align*}
		\bm{b}(t) &= \frac{\left( -3t^3 - 4t^5, 3 - t^8, 4t + t^7 \right)}{\sqrt{9 + 16t^2 + 9t^6 + 26t^8 + 16t^{10} + t^{14} + t^{16}}} \times \bm{t}(t) \\
		&= \frac{\left( -t^4 - t^{10} - t^{12}, 4t+ 4t^7 + 4t^9, -3 - 3t^6 -3t^8 \right)}{\sqrt{1 + t^6 + t^8} \sqrt{9 + 16t^2 + 9t^6 + 26t^8 + 16t^{10} + t^{14} + t^{16}}}. 
		\end{align*}
		Then, $(\bm{t}, \bm{b})$ is a Legendre curve in $\Delta$, and $\bm{b}$ is the binormal vector of $\bm{\gamma}$. 
		By routine computations, we obtain
		\begin{align*}
			\bm{n}(t) &= \bm{t}(t) \times \bm{b}(t) = \frac{\left( -3t^3 - 4t^5, 3 - t^8, 4t + t^7 \right)}{\sqrt{9 + 16t^2 + 9t^6 + 26t^8 + 16t^{10} + t^{14} + t^{16}}}, \\
			\kappa(t) &= \langle \dot{\bm{t}}(t), \bm{n}(t) \rangle = \frac{t^2 \sqrt{9 + 16t^2 + 9t^6 + 26t^8 + 16t^{10} + t^{14} + t^{16}}}{(1 + t^6 + t^8)^\frac{3}{2}}, \\
			\tau(t) &= \langle \dot{\bm{b}}(t), \bm{n}(t) \rangle = \frac{12 \sqrt{1 + t^6 + t^8}}{9 + 16t^2  + t^8}, 
		\end{align*}
		where $\bm{n}$ is the principal normal vector, $\kappa$ is the curvature, and $\tau$ is the torsion. 
		We note that both $\bm{\gamma}$ and $\bm{t}$ are singular at $t=0$ (see Figure \ref{fig:example}). 
		
		\begin{figure}[htbp]
			\centering
			\includegraphics[width=.75\textwidth]{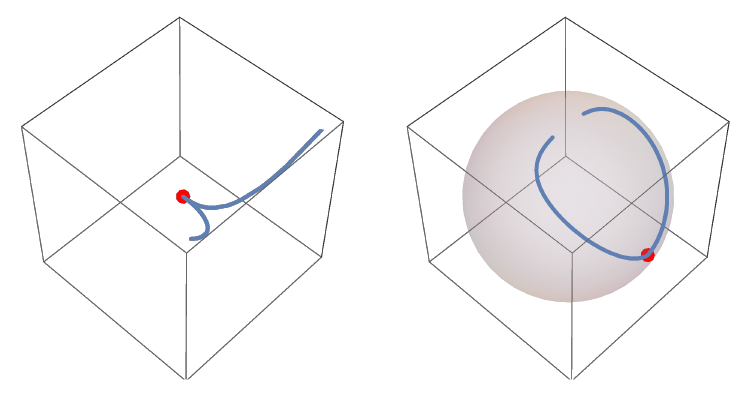}
			\caption{From left to right: the curve $\bm{\gamma}$ and its tangent vector $\bm{t}$ in Example~\ref{example.156}.}
			\label{fig:example}
		\end{figure}
	\end{example}
	
	\begin{example}\label{example.flat}
		Consider the curve $\bm{\gamma}: \mathbb{R} \to \mathbb{R}^3$ defined by
		\[
		\bm{\gamma}(t) =
		\begin{cases}
			\left( t, e^{-\frac{1}{t^2}}, 0 \right) & (t > 0), \\
			\left( 0, 0, 0 \right) & (t = 0), \\
			\left( t, 0, e^{-\frac{1}{t^2}} \right) & (t < 0).
		\end{cases}
		\]
		Then $\bm{\gamma}$ is a smooth regular curve on $\mathbb{R}$. 
		Although its unit tangent vector $\bm{t}$ is smooth on $\mathbb{R}$, it does not admit a smooth dual across $t=0$ (see Figure \ref{fig:example2}). 
		Hence, $\bm{t}$ is not a frontal in $S^2$, and consequently $\bm{\gamma}$ does not admit a generalized Frenet frame on the whole of $\mathbb{R}$. 
		
		\begin{figure}[htbp]
			\centering
			\includegraphics[width=.75\textwidth]{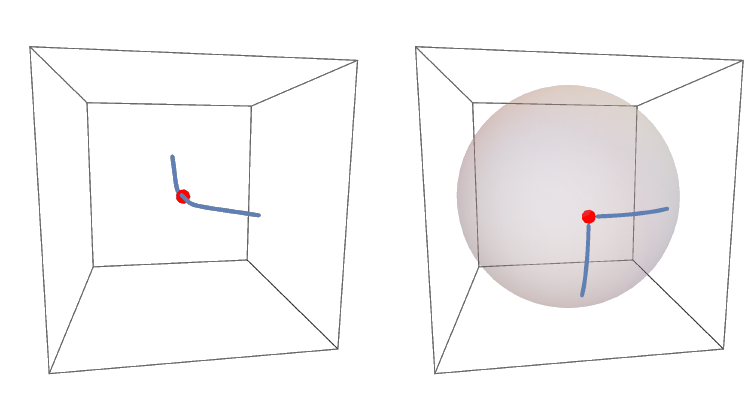}
			\caption{From left to right: the curve $\bm{\gamma}$ and its tangent vector $\bm{t}$ in Example~\ref{example.flat}.}
			\label{fig:example2}
		\end{figure}
	\end{example}
	
	\begin{remark}
		Note that Definition \ref{generalized.frenet} allows degenerate cases such as $\bm{\gamma}(t) = (0,0,0)$, which admit a Frenet frame in this generalized sense. 
		For applications, it is therefore necessary to impose additional conditions, such as the density of regular points. 
	\end{remark}
	
	\section{Frame sequence}
	We consider the (generalized) Frenet frame $\{\bm{t}, \bm{b}, \bm{n}\}$. 
	By definition, $\bm{t}$ and $\bm{b}$ are frontals in $S^2$. 
	We now investigate the case where $\bm{n}$ is also a frontal. 
	For convenience, we introduce the notation $\bm{t}_0 = \bm{t}$, $\bm{d}_0 = \bm{b}$, and $\bm{t}_1 = \bm{n}$. 	
	Let $\bm{d}_1:I \to S^2$ be a dual of $\bm{t}_1$, so that $(\bm{t}_1, \bm{d}_1)$ is a Legendre curve in $\Delta$. 
	We define $\bm{t}_2(t) = \bm{t}_1(t) \times \bm{d}_1(t)$. 
	Then $\{ \bm{t}_1, \bm{d}_1, \bm{t}_2 \}$ forms an orthonormal frame along $\bm{\gamma}$. 
	This construction is illustrated in Figure \ref{fig:frame.sequence1}. 
	
	\begin{figure}[htbp]
		\centering
		\begin{tikzpicture}[
			>=Stealth,
			scale=1.4,
			tangent/.style={->, thick},
			dual/.style={<->, thin}
			]
			
			\node (gamma) at (0,4) {$\bm{\gamma}$};
			\node (t0)     at (0,2) {$\bm{t}_0$};
			\node (d0)     at (0,0) {$\bm{d}_0$};
			\node (t1)     at (2,2) {$\bm{t}_1$};
			\node (d1)     at (2,0) {$\bm{d}_1$};      
			\node (t2)     at (4,2) {$\bm{t}_2$};      
			
			\draw[tangent] (gamma) -- (t0);
			\draw[tangent] (t0) -- (t1);
			\draw[tangent] (d0) -- (t1);
			\draw[tangent] (t1) -- (t2);   
			\draw[tangent] (d1) -- (t2);
			
			\draw[dual] (t0) -- (d0);
			\draw[dual] (t1) -- (d1);      
			
			\node at (-0.6,3) {\small tangent};
			\node at (-0.5,1) {\small dual};
			
			\draw[tangent] (5.2,1.6) -- (6.0,1.6);
			\node[anchor=west] at (6.1,1.6) {\small tangent};
			
			\draw[dual] (5.2,1.2) -- (6.0,1.2);
			\node[anchor=west] at (6.1,1.2) {\small dual};
			
		\end{tikzpicture}
		\caption{
			Relationships among $\bm{t}_i$, $\bm{d}_i$ ($i = 0,1$) and $\bm{t}_2$. 
		}
		\label{fig:frame.sequence1}
	\end{figure}
	We call $\{ \bm{t}_0, \bm{d}_0, \bm{t}_1 \}$ the \textit{$0$th-order Frenet frame}, and $\{ \bm{t}_1, \bm{d}_1, \bm{t}_2 \}$ the \textit{$1$st-order Frenet frame}. 
	Inductively, whenever $\bm{t}_k$ is a frontal in $S^2$, we choose a dual $\bm{d}_k$ of $\bm{t}_k$, define $\bm{t}_{k+1}(t) = \bm{t}_k(t) \times \bm{d}_k(t)$, and call $\{ \bm{t}_k, \bm{d}_k, \bm{t}_{k+1} \}$ the \textit{$k$th-order Frenet frame} along $\bm{\gamma}$. 
	
	On the other hand, starting from $\{ \bm{t}_0, \bm{d}_0, \bm{t}_1 \}$, we construct a Bishop frame along $\bm{\gamma}$ by fixing $t_0$ and rotating $\bm{d}_0$ and $\bm{t}_1$ (see Remark \ref{remark.bishop2}). 
	We denote the resulting Bishop frame  by $\{ \bm{t}_{-1}[t_0], \bm{d}_{-1}[t_0], \bm{t}_0 \}$. 
	This construction is illustrated in Figure \ref{fig:frame.sequence2}. 
	
	\begin{figure}[htbp]
		\centering
		\begin{tikzpicture}[
			>=Stealth,
			scale=1.4,
			tangent/.style={->, thick},
			dual/.style={<->, thin}
			]
			
			\node (gamma) at (0,4) {$\bm{\gamma}$};
			
			\node (t0)  at (0,2) {$\bm{t}_0$};
			\node (d0)  at (0,0) {$\bm{d}_0$};
			
			\node (t1)  at (2,2) {$\bm{t}_1$};
			\node (d1)  at (2,0) {$\bm{d}_1$};
			
			\node (t2)  at (4,2) {$\bm{t}_2$};
			
			\node (t-1) at (-2,2) {$\bm{t}_{-1}[t_0]$};
			\node (d-1) at (-2,0) {$\bm{d}_{-1}[t_0]$};
			
			\draw[tangent] (gamma) -- (t0);
			\draw[tangent] (t0) -- (t1);
			\draw[tangent] (d0) -- (t1);
			\draw[tangent] (t1) -- (t2);
			\draw[tangent] (d1) -- (t2);
			\draw[tangent] (t-1) -- (t0);
			\draw[tangent] (d-1) -- (t0);
			
			\draw[dual] (t0) -- (d0);
			\draw[dual] (t1) -- (d1);
			\draw[dual] (t-1) -- (d-1);
			
			\node at (-0.6,3) {\small tangent};
			\node at (-2.5,1) {\small dual};
			
			\draw[tangent] (5.2,1.6) -- (6.0,1.6);
			\node[anchor=west] at (6.1,1.6) {\small tangent};
			
			\draw[dual] (5.2,1.2) -- (6.0,1.2);
			\node[anchor=west] at (6.1,1.2) {\small dual};
			
		\end{tikzpicture}
		\caption{
			Relationships among $\bm{t}_i$, $\bm{d}_i$ ($i = -1,0,1$) and $\bm{t}_2$.
		}
		\label{fig:frame.sequence2}
	\end{figure}
	
	We call $\{ \bm{t}_{-1}[t_0], \bm{d}_{-1}[t_0], \bm{t}_0 \}$ the \textit{$(-1)$st-order Frenet frame}. 
	By analogy with the construction of the Bishop frame, we fix $\bm{t}_k$ and rotate $\bm{d}_k$ and $\bm{t}_{k+1}$
	to obtain $\bm{t}_{k-1}$ and $\bm{d}_{k-1}$. 
	Therefore, for every negative integer $k$, $\{ \bm{t}_k, \bm{d}_k, \bm{t}_{k+1} \}$ can be constructed recursively. 
	
	In summary, whenever the above construction can be continued in both directions, we obtain a $k$th-order Frenet frame along $\bm{\gamma}$ for each $k \in \mathbb{Z}$. 
	
	\begin{definition}
		A bi-infinite sequence $\{(\bm{t}_k,\bm{d}_k,\bm{t}_{k+1})\}_{k\in\mathbb Z}$ 
		constructed as above is called a \textit{frame sequence} associated with $\bm{\gamma}$. 
		For each $k\in\mathbb Z$, we call $\bm{t}_k$ the \textit{$k$th-order tangent vector} and $\bm{d}_k$ the \textit{$k$th-order dual vector}. 
	\end{definition}
	
	The construction of the $k$th-order Frenet frame along $\bm{\gamma}$ admits a natural interpretation in the framework of frontals in $S^2$. 
	Starting from the original Frenet frame (the $0$th order frame), the $k$th order frames are obtained through an iterative process involving the evolutes and involutes of frontals on $S^2$. 
	We summarize this situation in Figure \ref{fig:frame.sequence3}. 
	
		\begin{figure}[htbp]
		\centering
		\begin{tikzpicture}[
			>=Stealth,
			scale=1.4,
			tangent/.style={->, thick},
			evolute/.style={->, dashed, thick},
			involute/.style={->, dotted, thick},
			dual/.style={<->, thin}
			]
			
			\node (gamma) at (0,4) {$\bm{\gamma}$};
			
			\node (t0)  at (0,2) {$\bm{t}_0$};
			\node (d0)  at (0,0) {$\bm{d}_0$};
			
			\node (t1)  at (2,2) {$\bm{t}_1$};
			\node (d1)  at (2,0) {$\bm{d}_1$};
			
			\node (t2)  at (4,2) {$\bm{t}_2$};
			
			\node (t-1) at (-2,2) {$\bm{t}_{-1}[t_0]$};
			\node (d-1) at (-2,0) {$\bm{d}_{-1}[t_0]$};
			
			\draw[tangent] (gamma) -- (t0);
			\draw[tangent] (t0) -- (t1);
			\draw[tangent] (d0) -- (t1);
			\draw[tangent] (t1) -- (t2);
			\draw[tangent] (d1) -- (t2);
			\draw[tangent] (t-1) -- (t0);
			\draw[tangent] (d-1) -- (t0);
			
			\draw[dual] (t0) -- (d0);
			\draw[dual] (t1) -- (d1);
			\draw[dual] (t-1) -- (d-1);
			
			\draw[evolute] (t-1) -- (d0);
			\draw[evolute] (d-1) -- (d0);
			\draw[evolute] (t0) -- (d1);
			\draw[evolute] (d0) -- (d1);
			
			\draw[involute, bend left=25] (d0) to (t-1);
			\draw[involute] (d0) to[out=250, in=290] (d-1);
			\draw[involute, bend left=25] (d1) to (t0);
			\draw[involute] (d1) to[out=250, in=290] (d0);
			
			\node at (-0.6,3) {\small tangent};
			\node at (-2.5,1) {\small dual};
			\node at (-1,-0.25) {\small evolute};
			\node at (-1,-0.95) {\small involute};
			
			\draw[tangent] (5.2,1.6) -- (6.0,1.6);
			\node[anchor=west] at (6.1,1.6) {\small tangent};
			
			\draw[evolute] (5.2,1.2) -- (6.0,1.2);
			\node[anchor=west] at (6.1,1.2) {\small evolute};
			
			\draw[involute] (5.2,0.8) -- (6.0,0.8);
			\node[anchor=west] at (6.1,0.8) {\small involute};
			
			\draw[dual] (5.2,0.4) -- (6.0,0.4);
			\node[anchor=west] at (6.1,0.4) {\small dual};
			
		\end{tikzpicture}
		\caption{
			Relationships among $\bm{t}_i$, $\bm{d}_i$ ($i = -1,0,1$) and $\bm{t}_2$. 
		}
		\label{fig:frame.sequence3}
	\end{figure}
	
	The following theorem follows from Proposition \ref{spherical.FS} and Corollary \ref{Frenet.Serret.G.Frenet}. 
	
	\begin{theorem}[The $k$th-order Frenet-Serret formulas]
		For the $k$th-order Frenet frame along $\bm{\gamma}$, the following Frenet-Serret type formulas hold: 
		\begin{align}\label{kGFrenet}
			\frac{d}{dt}
			\begin{pmatrix}
				\bm{t}_k(t) \\
				\bm{d}_k(t) \\
				\bm{t}_{k+1}(t)
			\end{pmatrix}
			=
			\begin{pmatrix}
				0 & 0 & \kappa_k(t) \\
				0 & 0 & \tau_k(t) \\
				-\kappa_k(t) & -\tau_k(t) & 0
			\end{pmatrix}
			\begin{pmatrix}
				\bm{t}_k(t) \\
				\bm{d}_k(t) \\
				\bm{t}_{k+1}(t)
			\end{pmatrix}, 
		\end{align}
		where $\kappa_k(t) = \langle \dot{\bm{t}}_k(t), \bm{t}_{k+1}(t) \rangle$ and $\tau_k(t) = \langle \dot{\bm{d}}_k(t), \bm{t}_{k+1}(t) \rangle$. 
	\end{theorem}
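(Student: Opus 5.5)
The plan is to reduce the statement to Proposition~\ref{spherical.FS}, applied to the Legendre curve $(\bm{t}_k,\bm{d}_k)$. Everything hinges on checking that, for every $k\in\mathbb{Z}$, the pair $(\bm{t}_k,\bm{d}_k):I\to\Delta$ is a Legendre curve and $\bm{t}_{k+1}=\bm{t}_k\times\bm{d}_k$. Once this holds, Proposition~\ref{spherical.FS} with $\bm{\gamma}=\bm{t}_k$, $\bm{\nu}=\bm{d}_k$, $\bm{\mu}=\bm{t}_{k+1}$ gives \eqref{kGFrenet} verbatim, with $m=\kappa_k=\langle\dot{\bm{t}}_k,\bm{t}_{k+1}\rangle$ and $n=\tau_k=\langle\dot{\bm{d}}_k,\bm{t}_{k+1}\rangle$.

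For $k\ge 0$ this is immediate from the construction. For $k=0$ it is exactly Corollary~\ref{Frenet.Serret.G.Frenet}. For $k\ge1$, $\bm{d}_k$ is by definition a dual of the frontal $\bm{t}_k$, and $\bm{t}_{k+1}$ is defined as $\bm{t}_k\times\bm{d}_k$.

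The case $k<0$ is the main point. I would argue by downward induction: assume $(\bm{t}_{k+1},\bm{d}_{k+1})$ is Legendre with frame $\{\bm{t}_{k+1},\bm{d}_{k+1},\bm{t}_{k+2}\}$ and curvature $(\kappa_{k+1},\tau_{k+1})$. Regard $(\bm{t}_{k+2},\bm{d}_{k+1},\bm{t}_{k+1})$, up to the sign and ordering conventions making it positively oriented, as a framed-curve frame around the fixed vector $\bm{t}_{k+1}$. Choose $\theta$ with $\dot\theta$ equal to the $\ell$-coefficient of this frame, and rotate as in Section~\ref{framed} to obtain a Bishop frame $\{\bm{t}_k,\bm{d}_k,\bm{t}_{k+1}\}$. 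Vanishing of $\bar\ell$ means $\langle\dot{\bm{t}}_k,\bm{d}_k\rangle=0$, so $(\bm{t}_k,\bm{d}_k)$ is Legendre. I then need to check orientation, namely $\bm{t}_k\times\bm{d}_k=\bm{t}_{k+1}$. Since rotation in the plane orthogonal to $\bm{t}_{k+1}$ preserves orientation, this reduces to an initial-frame check, which can be arranged by the sign convention chosen before rotating.

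The only genuine obstacle is this bookkeeping of orientation and index conventions in the negative direction. I would handle it once, for $k=-1$, using $\{\bm{d}_0,-\bm{t}_1\}$ rotated about $\bm{t}_0$ (cf.\ Remark~\ref{frenet.characterization}), and then iterate. The formula itself then follows from Proposition~\ref{spherical.FS} with no further computation: skew-symmetry comes from orthonormality, and the two zero entries come from $\langle\dot{\bm{t}}_k,\bm{d}_k\rangle=0=\langle\dot{\bm{d}}_k,\bm{t}_k\rangle$.
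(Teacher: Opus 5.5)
Your proposal is correct and is essentially the paper's argument. The paper simply states that the theorem follows from Proposition~\ref{spherical.FS}, applied to the Legendre curve $(\bm{t}_k,\bm{d}_k)$ with $\bm{\mu}=\bm{t}_{k+1}$, together with Corollary~\ref{Frenet.Serret.G.Frenet}; your downward-induction check for $k<0$ makes explicit what the paper leaves implicit in its Bishop-frame construction.

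One correction to that check: take $(\bm{\nu}_1,\bm{\nu}_2)=(\bm{d}_{k+1},\bm{t}_{k+2})$. Then $\bm{d}_{k+1}\times\bm{t}_{k+2}=\bm{t}_{k+1}$ and $\ell=\tau_{k+1}$, so the rotated pair is directly $(\bm{t}_k,\bm{d}_k)$ with the correct orientation. Your suggested $\{\bm{d}_0,-\bm{t}_1\}$ is reversed, since $\bm{d}_0\times(-\bm{t}_1)=-\bm{t}_0$. The reason is that the $-\bm{n}$ of Remark~\ref{frenet.characterization} equals $\bm{t}\times\bm{b}$, which is already $\bm{t}_1$.
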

	
	We call (\ref{kGFrenet}) the \textit{$k$th-order Frenet-Serret formulas}, $\kappa_k$ the \textit{$k$th-order curvature}, and $\tau_k$ the \textit{$k$th-order torsion}. 
	
	\begin{remark}
		For a Frenet curve, the 0th-order Frenet frame corresponds to the classical Frenet frame. 
		On the other hand, the $(-1)$st-order Frenet frame corresponds to a Bishop frame. 
	\end{remark}
	
	\begin{remark}
		Similar higher-order invariants and hierarchies of invariants have appeared in the study of ruled and developable surfaces, see \cite{H1, IST1}. 
		The main difference is that our approach focuses not only on the invariants themselves but also on the recursive structure of the corresponding frames. 
	\end{remark}
	
	To better understand the recursive structure of the frame sequence, we consider the case where the pair $(\kappa_k(t),\tau_k(t))$ does not vanish simultaneously. 
	The following theorem gives recursive formulas relating the $k$th-order and $(k+1)$st-order curvatures and torsions. 
	
	\begin{theorem}\label{thm:recursive}
		Assume that $(\kappa_k(t), \tau_k(t)) \neq (0,0)$ for all $t\in I$. 
		Then $(\bm{t}_{k+1}, \bm{d}_{k+1})$ is a Legendre curve in $\Delta$, where 		
		\[
		\bm{d}_{k+1}(t) = \frac{- \tau_k(t) \bm{t}_k(t) + \kappa_k(t) \bm{d}_k(t)}{\sqrt{\kappa_k^2(t) + \tau_k^2(t)}}. 
		\]
		Moreover, the $(k+1)$st-order curvature and torsion are
		\begin{align}\label{eq.recursive.formula}
			\kappa_{k+1}(t) = \sqrt{\kappa_k^2(t)+\tau_k^2(t)}, \quad
			\tau_{k+1}(t) = \frac{\kappa_k(t)\dot{\tau}_k(t)-\dot{\kappa}_k(t)\tau_k(t)}{\kappa_k^2(t)+\tau_k^2(t)}. 
		\end{align}
	\end{theorem}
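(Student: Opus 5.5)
We work directly from the $k$th-order Frenet-Serret formulas \eqref{kGFrenet}. Write $\rho = \sqrt{\kappa_k^2 + \tau_k^2}$; by assumption $\rho > 0$, so $\rho$ is smooth on $I$. Define $\bm{d}_{k+1}$ as in the statement. It is smooth and a unit vector, since $\bm{t}_k, \bm{d}_k$ are orthonormal. It is orthogonal to $\bm{t}_{k+1}$, since $\bm{t}_{k+1}$ is orthogonal to both $\bm{t}_k$ and $\bm{d}_k$. Hence $(\bm{t}_{k+1}, \bm{d}_{k+1})$ maps into $\Delta$.

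For the Legendre condition, \eqref{kGFrenet} gives $\dot{\bm{t}}_{k+1} = -\kappa_k \bm{t}_k - \tau_k \bm{d}_k$. Therefore
\[
\langle \dot{\bm{t}}_{k+1}, \bm{d}_{k+1} \rangle = \frac{\kappa_k \tau_k - \tau_k \kappa_k}{\rho} = 0 .
\]
This shows that $\bm{t}_{k+1}$ is a frontal with dual $\bm{d}_{k+1}$. Consequently the $(k+1)$st-order frame is defined, with $\bm{t}_{k+2} = \bm{t}_{k+1} \times \bm{d}_{k+1}$.

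Next we compute $\bm{t}_{k+2}$ explicitly. From $\bm{t}_{k+1} = \bm{t}_k \times \bm{d}_k$, the triple $(\bm{t}_k, \bm{d}_k, \bm{t}_{k+1})$ is positively oriented. This gives $\bm{t}_{k+1} \times \bm{t}_k = \bm{d}_k$ and $\bm{t}_{k+1} \times \bm{d}_k = -\bm{t}_k$. Hence
\[
\bm{t}_{k+2} = \frac{-\tau_k \bm{d}_k - \kappa_k \bm{t}_k}{\rho} = \frac{\dot{\bm{t}}_{k+1}}{\rho}.
\]
It follows at once that $\kappa_{k+1} = \langle \dot{\bm{t}}_{k+1}, \bm{t}_{k+2} \rangle = \rho$, which is the first formula in \eqref{eq.recursive.formula}.

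For the torsion, write $\bm{d}_{k+1} = \cos\theta\, \bm{d}_k - \sin\theta\, \bm{t}_k$, where $(\cos\theta, \sin\theta) = (\kappa_k, \tau_k)/\rho$. In this notation $\bm{t}_{k+2} = -\cos\theta\, \bm{t}_k - \sin\theta\, \bm{d}_k$. To avoid needing a global angle function, we instead differentiate the coefficients $\kappa_k/\rho$ and $\tau_k/\rho$ directly. The terms coming from $\dot{\bm{t}}_k$ and $\dot{\bm{d}}_k$ are multiples of $\bm{t}_{k+1}$ by \eqref{kGFrenet}, so they are orthogonal to $\bm{t}_{k+2}$ and drop out. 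We are left with
\[
\tau_{k+1} = \langle \dot{\bm{d}}_{k+1}, \bm{t}_{k+2} \rangle = \frac{\tau_k}{\rho}\left(\frac{\tau_k}{\rho}\right)^{\!\cdot} + \frac{\kappa_k}{\rho}\left(\frac{\kappa_k}{\rho}\right)^{\!\cdot}\cdot(-1)\cdot(-1)\ \text{(with signs tracked)}.
\]
The identity to verify here is that, for a unit vector $(a, b) = (\kappa_k, \tau_k)/\rho$, the quantity $a\dot b - \dot a b$ equals $(\kappa_k \dot\tau_k - \dot\kappa_k \tau_k)/\rho^2$. The $\dot\rho$ terms cancel in this combination.

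The main obstacle is bookkeeping of signs. The expression we actually get is of the form $\pm(a\dot b - b\dot a)$ rather than $a\dot a + b\dot b$; the latter vanishes since $a^2 + b^2 = 1$. So I will carefully expand
\[
\left\langle \left(-\frac{\tau_k}{\rho}\right)^{\!\cdot} \bm{t}_k + \left(\frac{\kappa_k}{\rho}\right)^{\!\cdot} \bm{d}_k,\ -\frac{\kappa_k}{\rho} \bm{t}_k - \frac{\tau_k}{\rho} \bm{d}_k \right\rangle
= a\dot b - b\dot a,
\]
using $a = \kappa_k/\rho$ and $b = \tau_k/\rho$. This yields the stated formula for $\tau_{k+1}$.
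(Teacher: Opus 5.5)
Your proof is correct and follows the same route as the paper. You derive the Legendre property from $\dot{\bm{t}}_{k+1} = -\kappa_k\bm{t}_k - \tau_k\bm{d}_k$, and then you carry out the computation the paper calls ``straightforward'' via $\bm{t}_{k+2} = \dot{\bm{t}}_{k+1}/\rho$ and $\tau_{k+1} = a\dot b - b\dot a$. Delete the stray intermediate display for $\tau_{k+1}$, since as written it equals $a\dot a + b\dot b = 0$; your final expansion $\langle -\dot b\,\bm{t}_k + \dot a\,\bm{d}_k,\ -a\bm{t}_k - b\bm{d}_k\rangle = a\dot b - b\dot a$ is the correct one.
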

	
	\begin{proof}
		By the $k$th-order Frenet-Serret formulas, we have $\dot{\bm{t}}_{k+1}(t) = -\kappa_k(t)\bm{t}_k(t)-\tau_k(t)\bm{d}_k(t)$. 
		By the definition of $\bm{d}_{k+1}$, we have $\| \bm{d}_{k+1}(t) \| =1$ and $\langle \bm{d}_{k+1}(t), \bm{t}_{k+1}(t) \rangle = \langle \bm{d}_{k+1}(t), \dot{\bm{t}}_{k+1}(t) \rangle = 0$. 
		Hence, $(\bm{t}_{k+1}, \bm{d}_{k+1})$ is a Legendre curve in $\Delta$. 
		A straightforward computation yields (\ref{eq.recursive.formula}). 
	\end{proof}
	
	\begin{corollary}\label{coro.recursive}
		Under the assumptions of Theorem \ref{thm:recursive}, let 
		\[
		(\kappa_k(t), \tau_k(t)) = (r_k(t)\cos\theta_k(t), r_k(t)\sin\theta_k(t)), 
		\]
		where $r_k$ and $\theta_k$ denote the norm and argument of the vector $(\kappa_k(t),\tau_k(t))$, respectively.
		Then
		\[
		\kappa_{k+1}(t)=r_k(t),
		\qquad
		\tau_{k+1}(t)=\dot{\theta}_k(t).
		\]
	\end{corollary}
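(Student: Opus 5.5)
Since $(\kappa_k,\tau_k)$ never vanishes, I would first fix a smooth polar representation. Set $r_k=\sqrt{\kappa_k^2+\tau_k^2}$, which is smooth and strictly positive on $I$. The map $t\mapsto(\kappa_k(t),\tau_k(t))/r_k(t)$ is smooth from the interval $I$ to $S^1$. By the lifting property of the covering $\mathbb{R}\to S^1$ (using that $I$ is simply connected), there is a smooth function $\theta_k:I\to\mathbb{R}$ with $\kappa_k=r_k\cos\theta_k$ and $\tau_k=r_k\sin\theta_k$. This lift is unique up to an additive constant in $2\pi\mathbb{Z}$, so $\dot\theta_k$ is well defined. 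Making this choice of a smooth argument explicit is the only point needing care; the rest is a direct substitution into Theorem \ref{thm:recursive}.

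The first identity is immediate. Theorem \ref{thm:recursive} gives $\kappa_{k+1}=\sqrt{\kappa_k^2+\tau_k^2}$, and this equals $r_k$ because $r_k\ge 0$.

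For the torsion, I would substitute the polar form into $\tau_{k+1}=(\kappa_k\dot\tau_k-\dot\kappa_k\tau_k)/(\kappa_k^2+\tau_k^2)$. Differentiating,
\[
\dot\kappa_k=\dot r_k\cos\theta_k-r_k\dot\theta_k\sin\theta_k,\qquad \dot\tau_k=\dot r_k\sin\theta_k+r_k\dot\theta_k\cos\theta_k .
\]
In $\kappa_k\dot\tau_k-\dot\kappa_k\tau_k$, the terms involving $\dot r_k$ cancel, since both equal $r_k\dot r_k\cos\theta_k\sin\theta_k$. What remains is $r_k^2\dot\theta_k(\cos^2\theta_k+\sin^2\theta_k)=r_k^2\dot\theta_k$. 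Dividing by $r_k^2=\kappa_k^2+\tau_k^2\neq0$ gives $\tau_{k+1}=\dot\theta_k$.

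Equivalently, one can read off the same result from the identity $\frac{d}{dt}\arctan(\tau_k/\kappa_k)=(\kappa_k\dot\tau_k-\dot\kappa_k\tau_k)/(\kappa_k^2+\tau_k^2)$, which holds wherever $\kappa_k\neq0$. The global computation above avoids that restriction.
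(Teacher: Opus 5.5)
Your proof is correct and follows the paper's route: the paper leaves the corollary unproved as an immediate consequence of Theorem~\ref{thm:recursive}, and you substitute the polar form into its recursive formulas, where the $\dot r_k$ terms cancel. Your explicit choice of a smooth lift $\theta_k$, unique up to a constant in $2\pi\mathbb{Z}$, supplies a detail the paper leaves tacit and is what makes $\dot\theta_k$ well defined.
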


	Corollary \ref{coro.recursive} shows that $\kappa_{k+1}$ is the norm of the vector $(\kappa_k,\tau_k)$, whereas $\tau_{k+1}$ is the derivative of its argument. 
	Hence, the $(k+1)$st-order curvature and torsion measure, respectively, the magnitude and angular variation of the preceding invariants. 
	
	\begin{remark}
		By Theorem \ref{thm:recursive}, if there exists an integer $k$ such that $(\kappa_k(t), \tau_k(t)) \neq (0,0)$ for all $t\in I$, then the $j$th-order Frenet frame is well defined for every $j \ge k$. 
		In particular, if $\bm{\gamma}$ is a Frenet curve, then $\kappa_0(t) \neq 0$ for all $t\in I$. 
		Hence the $k$th-order Frenet frame is well defined for every integer $k$. 
	\end{remark}
	
	\begin{remark}
		By Remarks \ref{rem.analytic.frontal} and \ref{rem.analytic.framed}, in the real-analytic category, $\bm{\gamma}$ and $\bm{t}_k \ (k \in \mathbb{Z})$ are locally a framed base curve and a spherical frontal, respectively. 
		Hence the existence of a frame sequence is guaranteed locally. 
	\end{remark}
	
	\begin{example}\label{0th.helix}
		Consider the helix $\bm{\gamma}: \mathbb{R} \to \mathbb{R}^3$ defined by $\bm{\gamma}(t) = \left( a \cos t, a \sin t, bt \right)$, where $a$ and $b$ are positive constants. 
		By direct computation, 
		\[
		\dot{\bm{\gamma}}(t) = \left( -a \sin t, a \cos t, b \right) = \sqrt{a^2 + b^2} \cdot \frac{\left( -a \sin t, a \cos t, b \right)}{\sqrt{a^2 + b^2}}. 
		\]
		Hence, setting 
		\[
		\alpha(t) = \sqrt{a^2 + b^2}, \quad \bm{t}_0(t) = \frac{\left( -a \sin t, a \cos t, b \right)}{\sqrt{a^2 + b^2}}, 
		\]
		we obtain $\dot{\bm{\gamma}}(t) = \alpha(t) \bm{t}_0(t)$, where $\alpha$ is the speed and $\bm{t}_0$ is the 0th-order tangent (the tangent) vector of $\bm{\gamma}$. 
		By further direct computation, we obtain 
		\[
		\dot{\bm{t}}_0(t) = \frac{\left( -a \cos t, -a \sin t, 0 \right)}{\sqrt{a^2 + b^2}} = \frac{a}{\sqrt{a^2 + b^2}} \cdot \frac{\left( -a \cos t, -a \sin t, 0 \right)}{a}. 
		\]
		Next, define
		\[
		\bm{d}_0(t) = \frac{\left( -a \cos t, -a \sin t, 0 \right)}{a} \times \bm{t}_0(t) = \frac{\left( -b \sin t, b \cos t, -a \right)}{\sqrt{a^2 + b^2}}. 
		\]
		Then, $(\bm{t}_0, \bm{d}_0)$ is a Legendre curve in $\Delta$, and $\bm{d}_0$ is the 0th-order dual (the binormal) vector of $\bm{\gamma}$. 
		By routine computations, 
		\begin{align*}
			\bm{t}_1(t) &= \bm{t}_0(t) \times \bm{d}_0(t) = \frac{\left( -a \cos t, -a \sin t, 0 \right)}{a}, \\
			\kappa_0(t) &= \langle \dot{\bm{t}}_0(t), \bm{t}_1(t) \rangle = \frac{a}{\sqrt{a^2 + b^2}}, \\
			\tau_0(t) &= \langle \dot{\bm{d}}_0(t), \bm{t}_1(t) \rangle = \frac{b}{\sqrt{a^2 + b^2}}, 
		\end{align*}
		where $\bm{t}_1$ is the 1st-order tangent (the principal normal) vector, $\kappa_0$ is 0th-order curvature, and $\tau_0$ is the 0th-order torsion. 
		By repeatedly applying Theorem \ref{thm:recursive}, we obtain
		\[
		\kappa_k(t)=1, \quad \tau_k(t)=0
		\]
		for all $k \ge 1$. 
		Hence, after the first step, the frame sequence reaches a fixed point in the sense that all higher-order curvatures and torsions remain unchanged. 
	\end{example}
	
	\begin{example}\label{1st.helix}
		Consider the curve $\bm{\gamma}: \mathbb{R} \to \mathbb{R}^3$ defined by 
		\begin{align*}
			\bm{\gamma}(t) =& \biggl( - \frac{a^2 - b^2}{2a} \left( \frac{\cos((a + b) t)}{(a + b)^2} + \frac{\cos((a - b) t)}{(a - b)^2} \right), \\
			& \qquad - \frac{a^2 - b^2}{2a} \left( \frac{\sin((a + b) t)}{(a + b)^2} + \frac{\sin((a - b) t)}{(a - b)^2} \right), - \frac{\sqrt{a^2 - b^2}}{ab} \cos(bt) \biggr), 
		\end{align*}
		where $a$ and $b$ are positive constants satisfying $a>b$. 
		Since $\| \dot{\bm{\gamma}}(t) \| = 1$, setting $\alpha(t) = 1$ and $\bm{t}_0(t) = \dot{\bm{\gamma}}(t)$,  we obtain $\dot{\bm{\gamma}}(t) = \alpha(t) \bm{t}_0(t)$, where $\alpha$ is the speed and $\bm{t}_0$ is the 0th-order tangent (the tangent) vector of $\bm{\gamma}$. 
		By direct computation with a suitable generalized Frenet frame, we obtain
		\[
		\kappa_0(t) = \sqrt{a^2 - b^2} \cos(bt), \quad \tau_0(t) = \sqrt{a^2 - b^2} \sin(bt). 
		\]
		By repeatedly applying Theorem \ref{thm:recursive}, we obtain
		\[
		\kappa_1(t) = \sqrt{a^2 - b^2}, \quad \tau_1(t) = b, \quad \kappa_k(t) = a, \quad \tau_k(t) = 0
		\]
		for all $k \geq 2$. 
		This curve was introduced in \cite{IT1} as an example of a slant helix (1st-order helix). 
	\end{example}
	
	In \cite{IST1}, the notion of $k$th-order helices was introduced as a generalization of the classical helix, along with invariants defined in terms of the classical curvature and torsion that characterize such helices. 
	Example \ref{0th.helix} is a $0$th-order helix, while Example \ref{1st.helix} is a $1$st-order helix. 
	The above computations show that the higher-order curvatures and torsions naturally detect the order of these helices. 
	Indeed, the corresponding invariants become constant precisely at the relevant level. 
	This indicates that the higher-order curvatures and torsions are natural invariants for the study of $k$th-order helices. 
	
	The frame sequence also admits an interpretation from the viewpoint of the Taylor expansion of a curve. 
	Bouquet's formula can be rewritten in terms of the frame sequence. 
	
	\begin{proposition}
		Let $\bm{\gamma}$ be a space curve admitting a frame sequence. 
		Then, 
		\begin{align*}
			\bm{\gamma}(t) &= \bm{\gamma}(t_0) + (t - t_0) \alpha(t_0) \bm{t}_0(t_0) + \frac{1}{2!} (t - t_0)^2  \left( \dot{\alpha}(t_0) \bm{t}_0(t_0) + \alpha(t_0) \kappa_0(t_0) \bm{t}_1(t_0) \right) \\
			& \qquad + \frac{1}{3!} (t - t_0)^3 \left( \ddot{\alpha}(t_0) \bm{t}_0(t_0) + \left( 2 \dot{\alpha}(t_0) \kappa_0(t_0) + \alpha(t_0) \dot{\kappa}_0(t_0)  \right) \bm{t}_1(t_0) + \alpha(t_0) \kappa_0(t_0) \kappa_1(t_0) \bm{t}_2(t_0) \right) \\
			& \qquad + O((t - t_0)^4), 
		\end{align*}
		where $O((t - t_0)^4)$ denotes a term of order at least four. 
	\end{proposition}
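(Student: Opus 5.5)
The plan is to Taylor expand $\bm{\gamma}$ at $t_0$ to third order and express the derivatives $\dot{\bm{\gamma}}, \ddot{\bm{\gamma}}, \dddot{\bm{\gamma}}$ in the frame sequence, using only the relation $\dot{\bm{\gamma}} = \alpha \bm{t}_0$ and the $k$th-order Frenet-Serret formulas (\ref{kGFrenet}) for $k=0,1$. Since $\bm{\gamma}$ is smooth, Taylor's theorem gives
\[
\bm{\gamma}(t) = \bm{\gamma}(t_0) + (t-t_0)\dot{\bm{\gamma}}(t_0) + \tfrac{1}{2!}(t-t_0)^2\ddot{\bm{\gamma}}(t_0) + \tfrac{1}{3!}(t-t_0)^3\dddot{\bm{\gamma}}(t_0) + O((t-t_0)^4).
\]
It therefore suffices to compute the three derivatives at a general $t$ and then evaluate them at $t_0$.

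First, $\dot{\bm{\gamma}} = \alpha\bm{t}_0$ is immediate. Differentiating once and using the first row of (\ref{kGFrenet}) with $k=0$, which gives $\dot{\bm{t}}_0 = \kappa_0 \bm{t}_1$, we obtain
\[
\ddot{\bm{\gamma}} = \dot{\alpha}\bm{t}_0 + \alpha\kappa_0\bm{t}_1.
\]
Differentiating again requires $\dot{\bm{t}}_1$. The third row of the $0$th-order formulas gives $\dot{\bm{t}}_1 = -\kappa_0\bm{t}_0 - \tau_0\bm{d}_0$. However, $\bm{t}_1$ is also the $\bm{t}_k$ entry of the $1$st-order frame, so the first row of (\ref{kGFrenet}) with $k=1$ gives $\dot{\bm{t}}_1 = \kappa_1\bm{t}_2$. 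The stated formula uses the second expression, and I would use it as well.

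The main obstacle is reconciling these two expressions for $\dot{\bm{t}}_1$. Equating them gives
\[
\kappa_1\bm{t}_2 = -\kappa_0\bm{t}_0 - \tau_0\bm{d}_0,
\]
so at each point $\bm{t}_2 \in \mathrm{span}\{\bm{t}_0,\bm{d}_0\}$ up to scaling by $\kappa_1$. This is consistent with Theorem \ref{thm:recursive}, where $\kappa_1 = \sqrt{\kappa_0^2+\tau_0^2}$. Granting this, the third derivative is
\[
\dddot{\bm{\gamma}} = \ddot{\alpha}\bm{t}_0 + \dot{\alpha}\kappa_0\bm{t}_1 + (\dot{\alpha}\kappa_0 + \alpha\dot{\kappa}_0)\bm{t}_1 + \alpha\kappa_0\kappa_1\bm{t}_2.
\]
I need to check the $\bm{t}_0$ coefficient carefully. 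Expanding $\alpha\kappa_0\dot{\bm{t}}_1$ through the $0$th-order formula produces a term $-\alpha\kappa_0^2\bm{t}_0$. In the $1$st-order form, this term is hidden inside $\alpha\kappa_0\kappa_1\bm{t}_2$, because $\kappa_1\bm{t}_2$ already contains the $-\kappa_0\bm{t}_0$ component. So no separate $-\alpha\kappa_0^2\bm{t}_0$ term should appear, and I would verify that the stated $\bm{t}_0$ coefficient is exactly $\ddot{\alpha}$, which it is.

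Collecting terms gives the $\bm{t}_1$ coefficient $2\dot{\alpha}\kappa_0 + \alpha\dot{\kappa}_0$ and the $\bm{t}_2$ coefficient $\alpha\kappa_0\kappa_1$. Substituting the three derivatives, evaluated at $t_0$, into the Taylor expansion then yields the proposition. The only point needing care is that $\bm{t}_0, \bm{t}_1, \bm{t}_2$ need not be linearly independent. The expansion is nonetheless a valid vector identity, because every step is an exact equality of vectors and no frame decomposition is required.
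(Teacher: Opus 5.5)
Your proposal is correct and follows the paper's proof, which is this same Taylor expansion, with $\dot{\bm{\gamma}}=\alpha\bm{t}_0$, $\dot{\bm{t}}_0=\kappa_0\bm{t}_1$ and $\dot{\bm{t}}_1=\kappa_1\bm{t}_2$ (the first rows of the $0$th- and $1$st-order Frenet--Serret formulas) substituted into the third-order expansion. The ``obstacle'' you raise is not really one: $\dot{\bm{t}}_1=\kappa_1\bm{t}_2$ holds exactly, so nothing needs to be granted, and $\bm{t}_2\in\mathrm{span}\{\bm{t}_0,\bm{d}_0\}$ holds at every point simply because $\bm{t}_2\perp\bm{t}_1=\bm{t}_0\times\bm{d}_0$.
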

	\begin{proof}
		The result follows from direct computations and the Taylor expansion of $\bm{\gamma}$ at $t_0$. 
	\end{proof}
	
	\section*{Acknowledgements}
	This work was supported by JSPS KAKENHI Grant Number JP25K17257.

	\bigskip
	
	\noindent
	Faculty of Science and Technology, \\
	Chitose Institute of Science and Technology, \\
	Chitose 066-8655, Japan
	
	\medskip
	
	\noindent
	E-mail address: s-honda@photon.chitose.ac.jp
	
\end{document}